\documentclass[12pt]{article}

\usepackage[T1]{fontenc}
\usepackage[left=25mm, right=25mm, top=25mm, bottom=25mm]{geometry}
\usepackage{amssymb, amsmath, amscd, amsthm, verbatim, 
	mathrsfs, tikz, color , amsfonts}
\usepackage{cite}
\usepackage{setspace} 
\allowdisplaybreaks
\usepackage[vcentermath, enableskew, noautoscale]{youngtab}
\usepackage{young}

\usepackage{hyperref}

\usepackage{bm}
\usepackage{enumerate}
\usepackage{graphicx}
\usepackage{mathptmx}
\usepackage{enumitem} 
\usepackage{marvosym}    % 提供 \Letter 小信箱图标

\newtheorem{remark}{Remark}[section]
\newtheorem{theorem}{Therorem}[section]
\newtheorem{definition}{Definition}[section]
\newtheorem{lemma}{Lemma}[section]

\title{\textbf{Limit Bi-Shadowing for Semi-Hyperbolic Systems}}
\author{
    Linying Cheng$^{\href{mailto:202306021013@stu.cqu.edu.cn}{\textnormal{\Letter}}}$
    \thanks{Email: {202306021013@stu.cqu.edu.cn}}, 
    Haiye Guo$^{\href{mailto:202206021041t@stu.cqu.edu.cn}{\textrm{\Letter}}}$
 \thanks{Corresponding author, Email:202206021041t@stu.cqu.edu.cn}
    \\
    \small \textit{College of Mathematics and Statistics, Chongqing University, Chongqing 401331, China}
}

\date{}
\begin{document}
	\bibliographystyle{plain}
	\maketitle
	\renewcommand{\abstractname}{}
	\begin{abstract}
		\noindent\textbf{Abstract:}	This paper investigates the shadowing properties in semi-hyperbolic systems.  We introduce three classes of shadowing properties defined on families of manifolds, and prove that a semi-hyperbolic family possesses the $L^p$ bi-shadowing property, the limit bi-shadowing property, and the asymptotic bi-shadowing property under certain conditions. The proof strategy is to transform the shadowing problem into a fixed-point problem, and then apply the Brouwer fixed-point theorem to complete the verification.\\
		\noindent\textbf{Keywords:} Semi-hyperbolic system; $L^p$-bi-shadowing property; Limit bi-shadowing; Asymptotic bi-shadowing; Fixed point theorem
	\end{abstract}

\section{Introduction}

Dynamical systems are an important branch of mathematics, studying the laws of system state evolution over time, and are widely applied in physics, engineering, economics, biology, and other fields. In the early 1960s, structural stability became a significant research topic in dynamical systems. The discovery of systems such as the Smale horseshoe and Anosov torus automorphism revealed that complex chaotic systems possess structural stability, and the analytical condition ensuring their stability is called hyperbolicity. Specifically, if the tangent bundle of a diffeomorphism \(f\) over a compact invariant set \(\Lambda\) can be decomposed into a direct sum of an expanding subbundle \(E^u\) and a contracting subbundle \(E^s\), such that \(Df\) expands on \(E^u\) and contracts on \(E^s\), with expansion and contraction constants independent of the base point, then \(f\) is said to be uniformly hyperbolic on \(\Lambda\). When \(\Lambda\) is the entire space, \(f\) is called an Anosov diffeomorphism. Research results on structural stability can be found in references \cite{stability1, stability3}.

With the deepening of research on structural stability, the concept of pseudo-orbit tracing property emerged. If for any pseudo-orbit of the system, there exists a true orbit that approximates it synchronously, the system is said to have the tracing property. The tracing property not only provides tools for stability research but also offers theoretical guarantees for the reliability of numerical simulations; related research can be found in reference \cite{orbit1}. In 1975, Bowen in \cite{Bowen1} proposed the "shadowing lemma" proving that hyperbolic systems have the shadowing property, marking the beginning of shadowing theory. Subsequently, scholars such as Robinson, Conley, and Shub developed two main proof methods: Anosov's  analytical method (transforming the shadowing problem into a fixed point problem on Banach spaces) and Bowen's geometric method (utilizing the structure of stable and unstable manifolds)(\cite{Robinson, Conley, Shub, Anosov, Bowen1} ).

To address the difficulties of applying hyperbolicity in non-smooth, non-invertible, and non-autonomous systems, Diamond et al. in \cite{semi1} proposed the theory of "semi-hyperbolic systems" in 1995, introducing parameters \((\lambda_s, \lambda_u, \mu_s, \mu_u)\) to characterize the contraction in the stable direction, expansion in the unstable direction, and the coupling strength between them, no longer requiring strict invariance of the tangent bundle decomposition. Moreover, semi-hyperbolic systems still maintain the shadowing property. In 1995, Diamond  et al. in \cite{semi3} proved that semi-hyperbolic systems of Lipschitz mappings possess the bi-shadowing property (combining pseudo-orbit tracing and inverse tracing). The bi-shadowing property not only ensures that pseudo-orbits of the system can be traced by true orbits of a perturbed system but also guarantees that true orbits of the original system can be approximated by true orbits of the perturbed system, thus making the characterization of system tracing behavior more precise. Research on the bi-shadowing property can be found in references \cite{semi3, semi4-bi}.

With the development of theory and applications, researchers have also proposed concepts such as asymptotic pseudo-orbits and average pseudo-orbits, and developed various forms of shadowing, including Lipschitz shadowing, limit shadowing, and inverse shadowing. Among these, the limit shadowing property characterizes the dynamical behavior of a system over infinite time. A dynamical system has the limit shadowing property, meaning that when the single-step error tends to zero as time tends to infinity, the orbit obtained from numerical computation must converge to a true orbit. In 1996, Pilyugin proved that hyperbolic sets have the limit shadowing property in \cite{Lipschitz}. The core idea of the proof is to transform the shadowing problem into a fixed point problem, mainly using the contraction property brought by hyperbolic systems to solve it.

This paper investigates the shadowing properties of semi-hyperbolic families. Based on the existing concept of limit shadowing, we define several new types of limit bi-shadowing properties ($L^p$-bi-shadowing, limit bi-shadowing, and asymptotic bi-shadowing). These properties not only consider the system's tracing behavior on a long time scale but also ensure the system's stability after perturbation using the characteristics of bi-shadowing. Under semi-hyperbolic conditions, contraction is restricted, and the original method used by Pilyugin to prove limit shadowing for hyperbolic sets is no longer applicable. Therefore, this paper provides a new proof method, mainly using the Brouwer Fixed Point Theorem, proving separately for finite and infinite orbits, thereby obtaining the results that semi-hyperbolic families possess $L^p$-bi-shadowing, limit bi-shadowing, and asymptotic bi-shadowing properties.

\section{Main results}
\subsection{Notations}

This paper conducts research on non-stationary systems. The relevant notation is provided below. 

Let $\{(M_i, \langle\cdot, \cdot\rangle)\}_{i\in \mathbb{Z}}$ be a family of compact smooth manifolds, where $\langle\cdot, \cdot\rangle_i$ is the Riemannian metric on $M_i$. Denote by $|\cdot|_i$ and $d_i(\cdot, \cdot)$ the norm induced by $\langle\cdot, \cdot\rangle_i$ on the tangent bundle $TM_i$ and the distance, respectively. For simplicity, we assume that the diameter of each $M_i$ is uniformly bounded by 2. Furthermore, we assume that the sequence $\{(M_i, \langle\cdot, \cdot\rangle)\}_{i\in \mathbb{Z}}$ has uniformly bounded curvature and a uniformly positive injectivity radius. Specifically, 
\begin{equation*}
    \sup_{i\in\mathbb{Z}}\|R_i\|_\infty<+\infty\, \, \, \, \, \, \, \, \text{and}\, \, \, \, \, \, \, \, \inf_{i\in\mathbb{Z}}\text{inj}(M_i)>0, 
\end{equation*}
where $\|\cdot\|_\infty$ denotes the pointwise supremum of the curvature tensor norm, and $\operatorname{inj}(M_i)=\inf\limits_{x_i\in M_i}\operatorname{inj}(M_i, x)$ represents the global injectivity radius of $M_i$.

Take the disjoint union
\begin{equation*}
    \mathbf{M}=\bigsqcup_{i\in\mathbb{Z}}M_i=\bigcup_{i\in\mathbb{Z}}M_i \times i, 
\end{equation*}
we refer to  $\mathbf{M}$ as the total space of $\{M_i\}_{i\in\mathbb{Z}}$, with $M_i$ as a component of $\mathbf{M}$. An element $\bm{x}:=\{x_i\}_i$ on $\mathbf{M}$ with $x_i\in M_i$ for $i\in\mathbb{Z}$. Define a metric on the total space by

\begin{equation*}
    d(x, y)=
    \begin{cases}
        d_i(x_i, y_j), &\text{if }  x_i\in M_i, y_j\in M_j  \, \text{ and } \, i=j;\\
        1, &\text{if }  x_i\in M_i, y_j\in M_j \, \text{ and }\, i\neq j.
    \end{cases}
\end{equation*}

Thus, $\mathbf{M}$ has a natural Riemannian metric $\langle\cdot, \cdot\rangle$ induced by $\{\langle\cdot, \cdot\rangle_i\}_{i\in\mathbb{Z}}$ and the corresponding induced norm $|\cdot|$, i.e., $\langle\cdot, \cdot\rangle|_{M_i}=\langle\cdot, \cdot\rangle_i$ and $|\cdot||_{M_i}=|\cdot|_i(i\in\mathbb{Z})$. 

The transition maps $\bm{f}:\mathbf{M}\to\mathbf{M}$, where for each $i\in\mathbb{Z}$, $\bm{f}|_{M_i}=f_i:M_i\to M_{i+1}$, is defined as
\begin{equation*}
    f^n_i:=
    \begin{cases}
        f_{i+n-1}\circ\cdots\circ f_i:M_i\to M_{i+n}, &n>0;\\
        f_{i-n}^{-1}\circ\cdots\circ f_{i-1}^{-1}:M_i\to M_{i-n}, &n<0;\\
        I_i:M_i\to M_i, &n=0.
    \end{cases}
\end{equation*}
where $I_i$ is the identity map on $M_i$.

Let $\bm{f}:\mathbf{M}\to\mathbf{M}$ be a $C^1$ non-stationary dynamical system. $D\bm{f}$ is said to be \textit{uniformly equicontinuous} if for any $\eta>0$, there exists $\delta>0$ such that for any $i\in\mathbb{Z}$ and $x_i, y_i\in M_i$ with $d(x_i, y_i)<\delta$
\begin{equation*}
    \|Df_i(x_i)-Df_i(y_i)\|<\eta.
\end{equation*}

Now, we present the definition of a semi-hyperbolic family.
\begin{definition}
    A semi-hyperbolic family$\bm{f}$ on $\mathbf{M}$ is satisfing
\begin{enumerate}[start=1, label=(\arabic*), leftmargin=4em]
    \item The tangent bundle $T\mathbf{M}$ has a splitting $T\mathbf{M}=E^s\oplus E^u$ and $D\bm{f}$ is represented by
    \begin{equation*}
        Df_i=
        \begin{pmatrix}
            B_{11i} & B_{12i}\\
            B_{21i} & B_{22i}
        \end{pmatrix}, i\in \mathbb{Z;}
    \end{equation*}
    \item For $i\in\mathbb{Z}$, there exist positive numbers $\lambda_{si}, \lambda_{ui}, \mu_{si}, \mu_{ui}$ such that
    \begin{equation*}
        0<\inf_i\{\lambda^{-1}_{si}\}<1<\inf_i\{\lambda_{\mu i}\}, (1-\lambda_{si})(\lambda_{ui}-1)>\mu_{si}\mu_{ui}
    \end{equation*}
    and
    \begin{equation*}
        m(B_{11i})\ge \lambda_{\mu i}, |B_{22i}|\le \lambda_{si}, |B_{12i}|\le\mu_{ui}, |B_{21i}|\le \mu_{si}. 
    \end{equation*}
        
    \end{enumerate}

\end{definition}

Satisfying the above conditions, $\bm{f}$ is also said to be  $(\lambda_{si}, \lambda_{ui}, \mu_{si}, \mu_{ui})$-semi-hyperbolic family.

For $K\in \mathbf{M}$, if the decomposition $T_K\mathbf{M}=E^s\oplus E^u$ on the tangent bundle at $K$ satisfies an \textit{angle property}, meaning the angle between the subbundles $E^s$ and $E^u$ is uniformly bounded away from zero.

\subsection{Statements of main results}

This paper investigates the limit bi-shadowing property under semi-hyperbolic conditions (collectively referring to $L^p$-bi-shadowing, limit bi-shadowing, and asymptotic bi-shadowing as limit bi-shadowing). The main results are Theorems \ref{Lp-bi}, \ref{limit-bi}, and \ref{Asy-bi}:

\begin{theorem}\label{Lp-bi}
Let $\bm{f}:\mathbf{M}\to\mathbf{M}$ be a semi-hyperbolic family, where $\mathbf{M}=\bigsqcup\limits_{i\in\mathbb{Z}}M_i$. Assume $\bm{f}$ satisfies the angle property and $D\bm{f}$ is uniformly equicontinuous, then $\bm{f}$ has the $L^p$-bi-shadowing property: for any $1\le p\le\infty$, there exist constants $ L>0$ and $ \delta>0$ such that for any sequence $\Delta=\{\delta_i\}_{i\in\mathbb{Z}}$ with  $\|\Delta \|_p\le\delta$, if $\bm{ x}=\{x_i\}_{i\in\mathbb{Z}}$ is an infinite $\Delta$-pseudo-orbit of $\bm{f}$ (i.e., $d(f_i(x_i), x_{i+1})\le\delta_i$ for all $i\in\mathbb{Z}$), and $\bm{g}:\mathbf{M}\to\mathbf{M}$ is a family of continuous maps with
\begin{equation*}
    \|\bm{f}-\bm{g}\|_p:=\left\|\left\{\sup\limits_{{x_i}\in M_i}d(f_i(x_i), g_i(x_i))\right\}_{i\in\mathbb{Z}}\right\|_p\le\delta, 
\end{equation*}
then there exists an infinite orbit $\bm{y}=\{y_i\}_{i\in\mathbb{Z}}$ (i.e., $y_{i+1}=g_i (y_i)$ for all $i\in\mathbb{Z}$), such that\begin{equation*}
\|\{d(x_i, y_i)\}\|_p\leq L\|\Delta\|_p, \, \, \, i\in\mathbb{Z}.
\end{equation*}
\end{theorem}
\begin{remark}
    Theorem \ref{Lp-bi} shows that semi-hyperbolic systems possess the $L^p$-bi-shadowing property. This paper first proves the finite $L^p$-bi-shadowing property for $\bm{f}$, i.e., Theorem \ref{Lp-finite}, and then, building upon the finite case, further obtains the infinite $L^p$-bi-shadowing property, i.e., Theorem \ref{Lp-bi}. The proofs of these two theorems together demonstrate that semi-hyperbolic systems have the $L^p$-bi-shadowing property.
\end{remark}
%\vspace{2pt}
\begin{theorem}\label{limit-bi}
Let $\bm{f}:\mathbf{M}\to\mathbf{M}$ be a semi-hyperbolic family, where $\mathbf{M}=\bigsqcup\limits_{i\in\mathbb{Z}}M_i$. Assume $\bm{f}$ satisfies the angle property and $D\bm{f}$ is uniformly equicontinuous, then $\bm{f}$ has the limit bi-shadowing property: for any sequence $\Delta=\{\delta_i\}_{i\in\mathbb{Z}}$, let $\bm{x}=\{x_i\}_{i\in\mathbb{Z}}\subset \mathbf{M}$ be a $\Delta$-pseudo-orbit of $\bm{f}$ (i.e., $d(f_i(x_i), x_{i+1})\le\delta_i$ for all $i\in\mathbb{Z}$). Let $\bm{g}:\mathbf{M}
      \to\mathbf{M}$ be a family of continuous maps with
      \begin{equation*}
          \lim\limits_{|i|\to\infty} \delta_i = 0 \text{ and } \lim\limits_{|i|\to \infty}\sup_{x_i\in M_i} d(f_i(x_i), g_i(x_i))=0, 
      \end{equation*} then there exists an orbit $\bm{y}=\{y_i\}_{i\in\mathbb{Z}}$ of $\bm{g}$ (i.e., $y_{i+1}=g_i (y_i)$ for all $i\in\mathbb{Z}$), such that
      \begin{equation*}
          \lim\limits_{|i|\to\infty}d(x_i, y_i)=0.
      \end{equation*}
\end{theorem}

%\vspace{2pt}
\begin{theorem}\label{Asy-bi}
    Let $\bm{f}:\mathbf{M}\to\mathbf{M}$ be a semi-hyperbolic family, where $\mathbf{M}=\bigsqcup\limits_{i\in\mathbb{Z}}M_i$. Assume $\bm{f}$ satisfies the angle property and $D\bm{f}$ is uniformly equicontinuous, then $\bm{f}$ has the asymptotic bi-shadowing property: there exists a constant $v_0\in(0, 1)$ such that for any $0<v\le v_0$, if $\bm{x}=\{x_i\}_{i\in\mathbb{Z}}$ is a $v$-asymptotic pseudo-orbit of $\bm{f}$, i.e., there exists a sequence $\Delta=\{\delta_i\}_{i\in\mathbb{Z}}$ with
     \begin{equation*}
         d(f_i(x_i), x_{i+1})\le\delta_i, \forall i\in\mathbb{Z}, 
     \end{equation*}
and
     \begin{equation*}
         \limsup\limits_{|i|\to\infty}\sqrt[|i|]
    \delta_i\le v, 
     \end{equation*}
     and for any family of continuous maps $\bm{g}:\mathbf{M}\to\mathbf{M}$ with
    \begin{equation*}
        \limsup\limits_{|i|\to\infty}\sqrt[|i|]{\sup_{x_i\in M_i} d(f_i(x_i), g(x_i))}\le v, 
    \end{equation*}
    then there exists an orbit $\bm{y}=\{y_i\}_{i\in\mathbb{Z}}$ of $\bm{g}$ (i.e., $y_{i+1}=g_i (y_i)$ for all $i\in\mathbb{Z}$), such that
    \begin{equation*}
        \limsup\limits_{|i|\to\infty}\sqrt[|i|]{d(x_i, y_i)}\le v.
    \end{equation*}
\end{theorem}

\section{\texorpdfstring{$L^p$} --bi-shadowing property}

This section first defines $L^p$-bi-shadowing on the manifold family, then proves the $L^p$-bi-shadowing property for semi-hyperbolic systems. The proof will be presented separately for finite and infinite orbits.

\subsection{Finite \texorpdfstring{$L^p$}--bi-shadowing property}

\begin{definition}\label{Lp-def}

Let $\bm{f}:\mathbf{M}\to \mathbf{M}$ be a map family, where $\mathbf{M}=\bigsqcup\limits_{i\in\mathbb{Z}}M_i$, and $1\le p \le +\infty$. Assume there exist constants $L>0$ and $\delta>0$ such that for any sequence $\Delta=\{\delta_i\}_{i\in\mathbb{Z}}$ with $\|\Delta\|_p\le\delta$, for any $\Delta$-pseudo-orbit $\{x_i\}_{i\in\mathbb{Z}}$ of $\bm{f}$ (i.e., $d(f_i(x_i), x_{i+1})\le\delta_i$ for all $i\in\mathbb{Z}$), and for any continuous map $\bm{g}:\mathbf{M}\to \mathbf{M}$ with
      \begin{equation*}
      \|\bm{f}-\bm{g}\|_p:=\left\|\left\{\sup_{x_i\in M_i}d(f_i(x_i), g_i(x_i))\right\}_{i\in\mathbb{Z}}\right\|_p\le\delta, 
      \end{equation*}
      there exists an orbit $\bm{y}=\{y_i\}$ satisfying $y_{i+1}=g_i(y_i)$ for all $i\in\mathbb{Z}$, and
      \begin{equation*}
          \|\{d(x_i, y_i)\}_{i\in\mathbb{Z}}\|_p\le L\|\Delta\|_p, 
      \end{equation*}
then $\bm{f}$ is said to have the $L^p$-bi-shadowing property.
\end{definition}

\begin{theorem}{(Finite $L^p$-bi-shadowing of semi-hyperbolic systems)}\label{Lp-finite}
Let $\bm{f}:\mathbf{M}\to\mathbf{M}$ be a semi-hyperbolic family, where $\mathbf{M}=\bigsqcup\limits_{i\in\mathbb{Z}}M_i$. Assume $\bm{f}$ satisfies the angle property and $D\bm{f}$ is uniformly equicontinuous, then for any $1\le q\le\infty$, there exist constants $ L>0$ and $ \delta>0, $ such that for any $k\in\mathbb{N}$ and any sequence $\Delta_k=\{\delta_i\}_{i=k}^k, $ with $\|\Delta_k\|_p\le\delta$, if $\bm{ x}=\{x_i\}_{i=-k}^k$ is a finite $\Delta$-pseudo-orbit of $\bm{f}$ (i.e., $d(f_i(x_i), x_{i+1})\le\delta_i$ for  $i=-k, \cdots, k-1$), and $\bm{g}:\mathbf{M}\to\mathbf{M}$ is a family of continuous maps with
\begin{equation*}
    \|\bm{f}-\bm{g}\|_p:=\left\|\left\{\sup\limits_{{x_i}\in M_i}d(f_i(x_i), g_i(x_i))\right\}_{i=-k}^k\right\|_p\le \delta, 
\end{equation*}
then there exists a finite orbit $\bm{y}=\{y_i\}_{i=-k}^k$ satisfying $y_{i+1}=g_i (y_i)$ for  $i=-k, \cdots, k-1$, and\begin{equation*}
\|\{d(x_i, y_i)\}_{i=-k}^k\|_p\leq L\|\Delta\|_p.
\end{equation*}
\end{theorem}
\vspace{1em}

Before proving this theorem, we first state the relevant conclusions and lemmas.

Since the semi-hyperbolic family satisfies the angle property, there exists a constant $h\ge1$ such that
\begin{equation*}
    \|\Pi_{x_i}^s\|, \|\Pi_{x_i}^u\|\le h, i\in\mathbb{Z}, 
\end{equation*}
where $\Pi_{x_i}^s, \Pi_{x_i}^u$ are the \textit{projection operators} $\Pi_{x_i}^s:T_{x_i}M_i\to E_{x_i}^s\, $ and $\Pi_{x_i}^u:T_{x_i}M_i\to E_{x_i}^u, i\in\mathbb{Z}.$

For any $i\in\mathbb{Z}, f_i:M_i\to M_{i+1}$, take $x_i\in M_i$, the exponential map $\exp_{x_i}: T_{x_i}M_i\to M_i$ is a $C^\infty$ diffeomorphism. Define:
\begin{equation*}
    F_i:T_{x_i}M_i(\rho)\to T_{x_{i+1}}M_{i+1}
\end{equation*}
by
\begin{equation*}
   F_i(z_i)=\exp_{x_{i+1}}^{-1}\circ f_i \circ \exp_{x_i}(z_i), i\in\mathbb{Z}, 
\end{equation*}

where $T_{x_i}M_i(\rho)$ denotes the set of vectors in $T_{x_i}M_i$ of length $\le\rho$, i.e., $T_{x_i}M_i(\rho)=\{z\in T_{x_i}M_i:|z|<\rho\}$.

The derivative of $F_i$ at $z_i\in T_{x_i}M_{x_i}(\rho)$ satisfies
\begin{align*}
D(F_i(0_{x_i}))& =D(\exp_{x_{i+1}}^{-1}\circ f \circ \exp_{x_i})(0_{x_i}) = id|_{T_{x_{i+1}}M_{i+1}}\circ Df_i(x_i)\circ id|_{T_{x_i}M_i}=T_{x_i}f_i;\\
 DF_i(z_i) &= D(\exp_{x_{i+1}}^{-1}\circ f \circ \exp_{x_i})(z_i)\\
&=D(\exp_{x_{i+1}}^{-1})(f_i(\exp_{x_i}z_i))\circ Df_i(\exp_{x_i}z_i)\circ D(\exp_{x_i}(z_i)).
\end{align*}
Then $DF_i$ is uniformly equicontinuous on $T_{x_i}M_i(\rho)$.

Take $0<\delta\ll\rho, \Delta=\{\delta_i\}$ with $\|\Delta\|_p\le\delta$, for any $\Delta$-pseudo-orbit of $\bm{f}$ and continuous map $\bm{g}:\mathbf{M}\to\mathbf{M}$ with $\|\bm{f}-\bm{g}\|_p\le\delta.$

Let $G_i(z_i)=\exp_{x_{i+1}}^{-1}\circ g_i \circ \exp_{x_i}(z_i), z_i\in T_{x_i}M_i(\rho), i\in\mathbb{Z}.$

Let positive numbers $L, \delta$ satisfy $L\delta\le \rho$. For each $z_i \in T_{x_i}M_i$ with $|\Pi_{x_i}^sz_i|\leq L\delta$, define the map $F_{z_i}:E_{x_i}^u(L\delta)\to E_{x_{i+1}}^u$ by
\begin{equation*}
    F_{z_i}(\omega_i)=\Pi_{x_{i+1}}^u(F_i(\Pi_{x_i}^sz_i+\omega_i)-F_i(\Pi_{x_i}^sz_i)), 
\end{equation*}
where $E_{x_i}^u(L\delta)=\{z\in E_{x_i}^u:|z|\leq L\delta\}.$

\begin{lemma}\label{F_vi property}
Let $\lambda_u=\inf\limits_{i\in\mathbb{Z}}\{\lambda_{ui}\}>1$. Then for sufficiently small $\delta>0$, there exists a constant $\tilde{\lambda}_u\in (1, \lambda_u)$ such that for any $w_i, w_i'\in E_{x_i}^u(L\delta)$, we have
\begin{equation*}
     |F_{z_i}(w_i)-F_{z_i}(w'_i)|\ge \tilde{\lambda}_u|w_i-w'_i|, i\in\mathbb{Z}.
\end{equation*}
\end{lemma}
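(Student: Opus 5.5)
The plan is to linearize $F_i$ at the base point $\Pi^s_{x_i}z_i$ and treat the nonlinear part as a small perturbation, controlled by uniform equicontinuity. Write $v_i=\Pi^s_{x_i}z_i$ and apply the mean value theorem in integral form on the convex ball $T_{x_i}M_i(\rho)$. This gives
\begin{equation*}
F_i(v_i+w_i)-F_i(v_i+w_i')=\int_0^1 DF_i\bigl(v_i+w_i'+t(w_i-w_i')\bigr)\,dt\,(w_i-w_i').
\end{equation*}
We then split $DF_i(\xi)=DF_i(0_{x_i})+\bigl(DF_i(\xi)-DF_i(0_{x_i})\bigr)$, where $DF_i(0_{x_i})=Df_i(x_i)$ has the block form from Definition 2.1. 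All the points $\xi$ on the segment satisfy $|\xi|\le|v_i|+\max(|w_i|,|w_i'|)\le 2L\delta$. Applying $\Pi^u_{x_{i+1}}$, we get
\begin{equation*}
F_{z_i}(w_i)-F_{z_i}(w_i')=B_{11i}(w_i-w_i')+\Pi^u_{x_{i+1}}R_i\,(w_i-w_i'),
\end{equation*}
where $R_i=\int_0^1\bigl(DF_i(\xi_t)-DF_i(0)\bigr)dt$. Here we use that $w_i-w_i'\in E^u_{x_i}$, so the unstable-to-unstable block of $Df_i(x_i)$ is what acts (the block acting on $E^u$ with $m(\cdot)\ge\lambda_{ui}$).

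The first term is bounded below by the minimum norm condition: $|B_{11i}(w_i-w_i')|\ge m(B_{11i})|w_i-w_i'|\ge\lambda_{ui}|w_i-w_i'|\ge\lambda_u|w_i-w_i'|$. For the second term, uniform equicontinuity of $DF_i$ on $T_{x_i}M_i(\rho)$ gives, for any $\eta>0$, a $\delta_0$ independent of $i$ such that $\|DF_i(\xi)-DF_i(0)\|<\eta$ whenever $|\xi|\le 2L\delta\le2L\delta_0$. Combining this with the angle property $\|\Pi^u_{x_{i+1}}\|\le h$, we obtain $|\Pi^u R_i(w_i-w_i')|\le h\eta|w_i-w_i'|$. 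Hence, by the reverse triangle inequality,
\begin{equation*}
|F_{z_i}(w_i)-F_{z_i}(w_i')|\ge(\lambda_u-h\eta)|w_i-w_i'|.
\end{equation*}
Choose $\eta=(\lambda_u-1)/(2h)$ and set $\tilde\lambda_u=\lambda_u-h\eta=(\lambda_u+1)/2\in(1,\lambda_u)$. This holds for all $\delta\le\delta_0$ with $L$ fixed, uniformly in $i$.

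The main obstacle is justifying that the equicontinuity of $DF_i$ is \emph{uniform in $i$} and in the base point $x_i$. This needs the uniform bounds on curvature and injectivity radius, so that $D\exp_{x_i}$ and $D\exp^{-1}_{x_{i+1}}$ are uniformly close to the identity (after parallel identification) and uniformly continuous on $\rho$-balls. It also needs that $f_i(\exp_{x_i}\xi)$ stays within the injectivity radius of $x_{i+1}$, which holds because $d(f_i(x_i),x_{i+1})\le\delta_i\le\delta$ and the family is uniformly Lipschitz, ensured by choosing $\rho$ small. Comparing derivatives at different tangent spaces is done in exponential charts, where $DF_i(\xi)$ and $DF_i(0)$ are both maps $T_{x_i}M_i\to T_{x_{i+1}}M_{i+1}$, so the difference makes sense. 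A secondary subtlety is that $DF_i(0)$ equals $Df_i(x_i)$ only up to the $O(\delta_i)$ error from $f_i(x_i)\neq x_{i+1}$. This error is absorbed into $\eta$ by the same uniform continuity, again by taking $\delta$ small.
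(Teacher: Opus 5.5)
Your proposal is correct and follows essentially the same route as the paper: the mean value theorem along the segment $v_i+w_i'+t(w_i-w_i')$, uniform equicontinuity of $DF_i$ to keep the $E^u\to E^u$ block within $\eta$ of $B_{11i}$, and the angle constant $h$ to control the projection. Your one deviation is an improvement: comparing the averaged operator $\int_0^1 DF_i(\xi_t)\,dt$ with $B_{11i}$ before taking the minimum norm justifies the paper's final step $\bigl|\int_0^1 A(t)v\,dt\bigr|\ge\min_t m(A(t))|v|$, which is false for general $A(t)$ and holds here only because every $A(t)$ is $\eta$-close to $B_{11i}$; your radius $2L\delta$ for the segment points (rather than $L\delta$) is also the right one.
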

\begin{proof}
Take $0<\eta<\frac{\tilde{\lambda}_u-\lambda_u}{h^2}.$

   Since $DF_i(0_{x_i})=T_{x_i}f_i$ and $D\bm{f}$ is uniformly equicontinuous, when the positive numbers $L, \delta$ satisfy $L\delta<\rho$, for any $i\in\mathbb{Z}, x_{i+1}\in M_{i+1}$, and $d(f_i(x_i), x_{i+1})<\delta$, we have
  \begin{equation*}
      \|D_\gamma\exp_{x_{i+1}}^{-1}\circ f_i\circ\exp_{x_i}-D_{x_i}f_i\|\le \eta, \forall\gamma\in T_{x_i}M_i(L\delta).
  \end{equation*}
   Denote by
  \begin{equation*}
      \exp_{x_{i+1}}^{-1}\circ f_i\circ\exp_{x_i}=(\beta_i^u, \beta_i^s), \forall i\in \mathbb{Z}.
  \end{equation*}
  Then
  \begin{equation*}
      F_{z_i}(w_i)=\beta_i^u(\Pi_{x_i}^sz_i+w_i)-\beta_i^u(\Pi_{x_i}^sz_i).
  \end{equation*}
  and, for any $\gamma\in T_{x_i}M_i(L\delta)$, we have
 \begin{align*}
      m(\frac{\partial}{\partial u}\beta^u_i(\gamma))&\ge m(B_{11i})-\|\frac{\partial}{\partial u}\beta_i^u(\gamma)-B_{11i}\|\\
      &\ge \lambda_u-\|\Pi_{x_{i+1}}^u\circ(D_\gamma\exp_{x_{i+1}}^{-1}\circ f_i\circ \exp_{x_i}-D_{x_i}f_i)\circ\Pi_{x_i}^u\|\\
      & \ge \lambda_u -h^2\eta \\
      &\ge \tilde{\lambda}_u.
  \end{align*}
For any $w_i, w'_i\in E_{x_i}^u(L\delta)$, let$\gamma(t)=\Pi_{x_i}^sz_i+w_i+t(w'_i-w_i), t\in[0, 1]$, 则$\gamma(t)\in T_{x_i}M_i(\rho)$.By the mean value theorem, 
\begin{align*}
    |F_{z_i}(w_i)-F_{z_i}(w'_i)|
    & =|\beta_i^u(\Pi_{x_i}^sz_i+w_i)-(\beta_i^u(\Pi_{x_i}^sz_i+w'_i)))|\\
    & = |\int_0^1\frac{\partial }{\partial t}(\beta_i^u(\gamma(t)))\, dt |\\
    & = |\int_0^1 \frac{\partial}{\partial u}\beta^u_i (\gamma(t))(w'_i-w_i)\, dt| \\
    & \geq \min_{t\in [0, 1]} m(\frac{\partial}{\partial u}\beta_i^u(\gamma(t)))|w'_i-w_i|\\
    & \geq \tilde{\lambda}_u|w_i-w'_i|.
\end{align*}
\end{proof}

\begin{lemma}\label{Q_vi property}
 $Q_{z_i}:=F_{z_i}^{-1}$is well-defined and continuous, and$|Q_{z_i}(w_i)|\leq\tilde{\lambda}_u^{-1}|w_i|.$
\end{lemma}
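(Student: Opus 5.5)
The plan is to deduce everything from the expansion estimate of Lemma \ref{F_vi property}, applied to the map $F_{z_i}:E^u_{x_i}(L\delta)\to E^u_{x_{i+1}}$. It is continuous, since $F_i$ is $C^1$ on $T_{x_i}M_i(\rho)$ and $\Pi^u_{x_{i+1}}$ is linear and bounded by $h$. Setting $w_i'=0$ and using $F_{z_i}(0)=0$ gives
\begin{equation*}
|F_{z_i}(w_i)|\ge\tilde\lambda_u|w_i|, \qquad |F_{z_i}(w_i)-F_{z_i}(w_i')|\ge\tilde\lambda_u|w_i-w_i'|.
\end{equation*}
Hence $F_{z_i}$ is injective, and its inverse on the image is Lipschitz with constant $\tilde\lambda_u^{-1}<1$, so in particular continuous.

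The substantive point is that $Q_{z_i}$ is defined on a fixed ball, namely $E^u_{x_{i+1}}(\tilde\lambda_u L\delta)$; this is the domain needed later to set up the fixed-point map. To prove this I would use invariance of domain, or more concretely a degree argument. The map $F_{z_i}$ is a continuous injection of the closed ball $B=E^u_{x_i}(L\delta)$, of dimension $\dim E^u$, into a space of the same dimension. Its restriction to the boundary sphere $\partial B$ takes values in $\{|v|\ge\tilde\lambda_u L\delta\}$. Fix a target $v$ with $|v|<\tilde\lambda_u L\delta$ and consider the homotopy $H_t(w)=tF_{z_i}(w)+(1-t)A w$ with $A=\frac{\partial}{\partial u}\beta^u_i(\Pi^s_{x_i}z_i)$. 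This linear map has conorm at least $\tilde\lambda_u$, by the estimate inside the proof of Lemma \ref{F_vi property}.

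The mean-value computation in that proof runs through linear maps close to $B_{11i}$, within $h^2\eta$. It therefore gives $|H_t(w)|\ge(\tilde\lambda_u-O(\eta))|w|$ uniformly in $t$, which means $v$ is never attained by $H_t$ on $\partial B$ once $v$ lies in a slightly smaller ball. Consequently $\deg(F_{z_i},B,v)=\deg(A,B,v)=\pm1\neq0$, and so $v$ lies in the image of $F_{z_i}$. Shrinking $\tilde\lambda_u$ slightly absorbs the $O(\eta)$ loss. Alternatively one can avoid degree theory: $F_{z_i}(B)$ is compact, and by invariance of domain $F_{z_i}(\operatorname{int}B)$ is open. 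Since the boundary image misses the smaller ball, the image of the interior is open and closed in that connected ball, and it is nonempty because it contains $0$, so it is the whole ball.

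The bound on the inverse then follows at once. For $w$ in the ball, put $u=Q_{z_i}(w)$; then $|w|=|F_{z_i}(u)|\ge\tilde\lambda_u|u|$, which gives $|Q_{z_i}(w)|\le\tilde\lambda_u^{-1}|w|$. I expect the main obstacle to be the surjectivity onto a uniform ball, which is exactly the domain issue above. I would handle it with the invariance-of-domain argument, since it uses only injectivity and the boundary estimate already at hand.
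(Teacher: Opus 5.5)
Your proof is correct, and it goes further than the paper. The paper states this lemma with no proof at all, as if it were immediate from Lemma \ref{F_vi property}.

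\textbf{What the paper leaves out.} What is actually immediate is your first paragraph: injectivity of $F_{z_i}$, a $\tilde{\lambda}_u^{-1}$-Lipschitz inverse on the image $F_{z_i}(E^u_{x_i}(L\delta))$, and the bound via $F_{z_i}(0)=0$. However, Lemma \ref{H well defined} reduces well-definedness of $\mathcal{H}$ to the estimate $|J_i|\le\tilde{\lambda}_uL\delta$. That reduction silently uses that $Q_{z_i}$ is defined on the whole closed ball $E^u_{x_{i+1}}(\tilde{\lambda}_uL\delta)$, and your topological step is exactly what justifies it.

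\textbf{Your two surjectivity arguments.} The invariance-of-domain version loses nothing in the radius. $F_{z_i}(\operatorname{int}B)\cap V$ is nonempty and clopen in the open ball $V$ of radius $\tilde{\lambda}_uL\delta$. Compactness of $F_{z_i}(B)$ then gives the closed ball, which matters because the paper's bound on $|J_i|$ is non-strict.

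In the degree version you need not shrink $\tilde{\lambda}_u$ either. Write $H_t(w)=C_{t,w}\,w$, where $C_{t,w}$ is a convex combination of operators each within $h^2\eta$ of $B_{11i}$. Then $m(C_{t,w})\ge\lambda_u-h^2\eta\ge\tilde{\lambda}_u$.

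This closeness-to-$B_{11i}$ reasoning is also the correct way to get Lemma \ref{F_vi property} itself. The paper's step there bounds $\bigl|\int_0^1A_tv\,dt\bigr|$ below by $\min_t m(A_t)\,|v|$, with $A_t=\frac{\partial}{\partial u}\beta^u_i(\gamma(t))$. That inequality fails for general operator families; averaging rotations already breaks it.

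\textbf{Two points to make explicit.}

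First, the topological step needs $\dim E^u_{x_i}=\dim E^u_{x_{i+1}}$. This is a standing constant-rank assumption, not a consequence of the semi-hyperbolicity inequalities. Those inequalities only give $\dim E^u_{x_i}\le\dim E^u_{x_{i+1}}$, through $m(B_{11i})>0$. Without equal dimensions, no surjectivity onto a ball is possible.

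Second, the Brouwer argument in Theorem \ref{Lp-finite} needs $(z_i,v)\mapsto Q_{z_i}(v)$ to be jointly continuous, not just continuous in $v$. This follows from the same expansion estimate:
$\tilde{\lambda}_u|Q_{z_i}(v)-Q_{z_i'}(v')|\le|v-v'|+\sup_{\omega\in E^u_{x_i}(L\delta)}|F_{z_i}(\omega)-F_{z_i'}(\omega)|$.
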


Let $k\in\mathbb{Z}^+, $ set $\mathbb{K}=\{-k, -k+1, \cdots, k-1, k\}$.For $i\in\mathbb{K}, $ set
\begin{align*}
   & \Phi_{\mathbb{K}}=\{\bm{z}=\{z_i\}_{i\in\mathbb{K}}:z_i\in T_{x_i}M_i\};\\
    &  \Phi_{\mathbb{K}}(L\delta) = \{\bm{z}\in\Phi_{\mathbb{K}}:\, |\Pi_{x_i}^sz_i|, |\Pi_{x_i}^uz_i|\le L\delta\};\\
&  \Phi_{\mathbb{K}, p}(L\delta) = \{\bm{z}\in\Phi_{\mathbb{K}}:\, \|\{\Pi_{x_i}^sz_i\}\|_p\le L\delta, \, \|\{\Pi_{x_i}^uz_i\}\|_p\le L\delta\}.
\end{align*}
It is obvious that $ \Phi_{\mathbb{K}, p}(L\delta) $is a closed subset of $ \Phi_{\mathbb{K}}(L\delta) $.
Introduce the operator$\mathcal{H}:\Phi_{\mathbb{K}}(L\delta)\to\Phi_{\mathbb{K}} , \bm{z}=\{z_i\}_{i\in\mathbb{K}}\mapsto\bm{ w}=\{w_i\}_{i\in\mathbb{K}}$ defined by
\begin{align}\label{stable}
    & \Pi_{x_{-k}}^sw_{-k} =0;\notag\\  &\Pi_{x_{i+1}}^sw_{i+1}=\Pi_{x_{i+1}}^s(G_i(z_i)), &i=-k, \cdots, k-1; \\
   &  \Pi_{x_k}^uw_k=0;\notag\\
  &   \Pi_{x_i}^uw_i=  Q_{z_i}(\Pi_{x_{i+1}}^u(-G_i(z_i)+F_i(z_i)-F_i(\Pi_{x_i}^sz_i)+z_{i+1})), &i=-k, \cdots, k-1.  \label{unstable} 
\end{align}
\begin{lemma}\label{H well defined}  
$\mathcal{H}:\Phi_{\mathbb{K}}(L\delta)\to\Phi_{\mathbb{K}}$ is well-defined.
\begin{proof} Take $\mu_u=\sup\limits_i \{\mu_{ui}\}, \tilde{\mu}_u>\mu_u$ and $0<\eta<\frac{\tilde{\mu}_u-\mu_u}{h^2}.$

Clearly, \eqref{stable} is well-defined for $\bm{z}\in\Phi_{\mathbb{K}}(L\delta)$.We only need to prove \eqref{unstable} is well-defined. By Lemma \ref{Q_vi property}, it is suffices to prove that, for $ i=-k, \cdots, k-1$, 
\begin{equation*}
    |\Pi_{x_{i+1}}^u(-G_i(z_i)+F_i(z_i)-F_i(\Pi_{x_i}^sz_i)+z_{i+1}|\le\tilde{\lambda}_uL\delta.
\end{equation*}
Denote
\begin{equation*}
       J_i=(\Pi_{x_{i+1}}^u(-G_i(z_i)+F_i(z_i)-F_i(\Pi_{x_i}^sz_i)+z_{i+1}).
\end{equation*}
let $J_i=J_{1i}+J_{2i}+J_{3i}$, where
\begin{align*}
    & J_{1i} = \Pi_{x_{i+1}}^u(-G_i(z_i)+F_i(z_i)+(0_{i+1}-F_i(0_i));\\
    & J_{2i} = \Pi_{x_{i+1}}^u(F_i(0_i)-F_i(\Pi_{x_i}^sz_i));\\
    & J_{3i} = \Pi_{x_{i+1}}^uz_{i+1}.
\end{align*}

To estimate $|J_{1i}|$, by definition, 
\begin{equation*}
    |G_i(z_i)-F_i(z_i)|\leq \sup_{x_i\in M_i} d(g_i(x_i), f_i(x_i))\le\delta.
\end{equation*}
On the other hand,  $F_i(0_i)=\exp_{x_{i+1}}^{-1}(f_i(x_i))$, By the definition of the pseudo-orbit  of the sequence $\{x_i\}_{i=k}^k$, 
\begin{equation*}
    |0_{i+1}-F_i(0_i)|\le \delta_{i+1}\le\delta.
\end{equation*}
So
\begin{equation}\label{J1}
    |J_{1i}|\leq 2h\delta. 
\end{equation}
Now estimate $|J_{2i}|$. For any $\gamma\in E_{x_i}^u(L\delta), $
\begin{align*}
  \|\frac{\partial}{\partial s}\beta_i^u(\gamma)\|&\le \|\frac{\partial}{\partial s}\beta_i^u(\gamma)-B_{12i}\|+\|B_{12i}\|\\
  &\le\|\Pi_{x_{i+1}}^u\circ(D_\gamma\circ\exp_{x_{i+1}}^{-1}\circ f_i\circ\exp_{x_i}-D_{x_i}f_i)\circ\Pi_{x_i}^s\|+\mu_u\\
  &\le h^2\eta+\mu_u\\
  &\le \tilde{\mu}_u.
\end{align*}
By the mean value theorem, 
\begin{equation*}
    \beta_i^u(0)-\beta_i^u(\Pi_i^sz_i)=\int_0^1D\beta_i^u(t\cdot\Pi_i^sz_i)dt\cdot(-\Pi_i^sz_i)
\end{equation*}
Thus, 

\begin{align}\label{J2}
    |J_{2i}|&=|\beta_i^u(0_i)-\beta_i^s(\Pi_i^sz_i)|\nonumber\\
    &\le \sup_{\gamma\in E_{x_i}^u(L\delta)}\|\frac{\partial}{\partial s}\beta_i^u(\gamma) \|\cdot|\Pi_{x_i}^sz_i|\\
    &\le \tilde{\mu}_sL\delta.\nonumber
\end{align}

By the definition of $z\in \Phi_{\mathbb{K}}(L\delta)$, it is obvous that
\begin{equation}\label{J3}
    |J_{3i}|=|\Pi_{x_{i+1}}^uz_{i+1}|\le L\delta.
\end{equation}
so, 
\begin{equation*}
    |J_i|\le(2h+\tilde{\mu}_uL+L)\delta
\end{equation*}
Take $L\ge\dfrac{2h}{\tilde{\lambda}_u-1-\tilde{\mu}_u}$. Then
\begin{equation}\label{J value}
    |J_i|\leq \tilde{\lambda}_uL\delta.
\end{equation}
\end{proof}
\end{lemma}

\begin{lemma}\label{H-self}
$\mathcal{H}(\Phi_{\mathbb{K}, p}(L\delta))\subset\Phi_{\mathbb{K}, p}(L\delta).$
\begin{proof}
  Let $\bm{z}\in\Phi_{\mathbb{K}}(L\delta), w=\mathcal{H}(z)$, denote:
  \begin{equation*}
      m^s(\bm{w})=\{\Pi_{x_i}^sw_i\}_{i\in\mathbb{K}}, \, \, m^u(\bm{w})=\{\Pi_{x_i}^uw_i\}_{i\in\mathbb{K}}.
  \end{equation*}
  (1) Unstable direction:
  By Lemma \ref{Q_vi property}, for each $i\in \mathbb{K}$, we have
  \begin{equation*}
      |\Pi_{x_i}^uw_i|\leq\tilde{\lambda}_u^{-1}(|J_{1i}|+|J_{2i}|+|J_{3i}|).
  \end{equation*}
  
 Let $J_1=\{J_{1i}\}_{i\in \mathbb{K}}, \, J_2=\{J_{2i}\}_{i\in \mathbb{K}}, \, J_3=\{J_{3i}\}_{i\in \mathbb{K}}, $ Taking the $P$-norm of \eqref{J1}, \eqref{J2}, and \eqref{J3}, we have
 \begin{align*}
     &\|J_1\|_p\le 2h\delta;\\
     &\|J_2\|_p\le\tilde{\mu}_u\|\{\Pi_{x_i}^sz_i\}\|_p\le\tilde{\mu}_uL\delta;\\
     &\|J_3\|_p\le\|\{\Pi_{x_{i+1}}^uz_{i+1}\}\|_p\le L\delta.
 \end{align*}
According to the properties of the p-norm and inequality \eqref{J value}:
\begin{align*}
        \|m^u(\mathcal{H}\bm{z})\|_p=\|m^u(\bm{w})\|_p
        &\leq\tilde{\lambda}_u^{-1}\left(\|J_1\|_p+\|J_2\|_p+\|J_3\|_p\right)\\
        & \leq \tilde{\lambda}_u^{-1}(2h\delta+\tilde{\mu}_uL\delta+L\delta)\\
        &\leq L\delta.
\end{align*}
(2) Stable direction: For equation \eqref{stable}, $i=-k, \cdots, k-1$, denote
\begin{equation*}
    \Pi_{x_{i+1}}^sw_{i+1}=I_{1i}+I_{2i}.
\end{equation*}
where, 
\begin{align*}
    &  I_{1i}=\Pi_{x_{i+1}}^s(G_i(z_i)-F_i(z_i)+F_i(0_i)-0_{i+1});\\
    &  I_{2i}=\Pi_{x_{i+1}}^s(F_i(z_i)-F_i(0_i)).
\end{align*}
Let $I_1=\{I_{1i}\}_{i\in\mathbb{K}}, \, I_2=\{I_{2i}\}_{i\in\mathbb{K}}$, \, then
\begin{equation*}
    \|m^s(\bm{w})\|_p\le \|I_1\|_p+\|I_2\|_p.
\end{equation*}
where, 
\begin{equation}\label{I1}
    |I_{1i}|\le h|G_i(z_i)-F_i(z_i)|+h|F_i(0_i)-0_{i+1}|, 
\end{equation}
and then
\begin{align*}
    \|I_1\|_p
    & \le h\|\bm{g}-\bm{f}\|_p+h\|\Delta\|_p\\
    & \le 2h\delta.
\end{align*}
On the other hand, for $I_{2i}$, by the continuity of $DF_i$ and the semi-hyperbolic property of $\bm{f}$, take $\lambda_s=\inf\limits_i\{\lambda^{-1}_{si}\}>1, \mu_s=\sup\limits_i\{\mu_{si}\}.$ For $ F_i=\exp_{x_{i+1}}^{-1}\circ f_i \circ \exp_{x_i}$, by lemma\ref{F_vi property}, $\exists1<\tilde{\lambda}_s<\lambda_s, \tilde{\mu}_u>\mu_u, $ such that $0<\eta<\min\{\frac{1-\tilde{\lambda}^{-1}_s}{h^2}, \frac{\tilde{\mu}_s-\mu_s}{h^2}\}, $ and
\begin{equation*}
    \|\frac{\partial}{\partial s}\beta^s_i(v)\|\le \tilde{\lambda}_s^{-1}\text{ and }\|\frac{\partial}{\partial u}\beta_i^s(v)\|\le \tilde{\mu}_s, \, \, \, \forall v\in T_{x_i}M_i(\rho).
\end{equation*}
So, 
\begin{align}\label{I2}
    |I_{2i}|
    & = |\beta^s_i(z_i)-\beta^s_i(0_i)|\notag\\
    & =|\int_0^1\frac{\partial}{\partial t }(\beta_i^s (tz_i))\, dt|\notag\\
    &=|\int_o^1(\frac{\partial}{\partial u}\beta^s_i(tz_i), \frac{\partial}{\partial s}\beta_i^u(tz_i))\cdot (\Pi_{x_i}^u z_i, \Pi _{x_i}^s z_i)^T dt| \notag\\
    & \leq \sup_{t\in[0, 1]}\|\frac{\partial}{\partial u}\beta_i^s(tz_i)\||\Pi_{x_i}^uz_i|+\sup_{t\in[0, 1]}\|\frac{\partial}{\partial s}\beta_i^s(tz_i))\||\Pi_{x_i}^sz_i|\notag\\
    &\leq \tilde{\mu}_s|\Pi_{x_i}^u z_i|+\tilde{\lambda}_s^{-1}|\Pi_{x_i}^s z_i|;
\end{align}
Furthermore, 
\begin{equation*}
        \|I_2\|_p\leq\tilde{\mu}_s\|\{\Pi_{x_i}^uz_i\}\|_p+\tilde{\lambda}_s^{-1}\|\{\Pi_{x_i}^sz_i\}\|_p\le(\tilde{\lambda}_s^{-1}+\tilde{\mu}_s)L\delta.
\end{equation*}
Therefore, when $L\ge\dfrac{2h}{1-\tilde{\lambda}_s^{-1}-\tilde{\mu}_s}$, we have
\begin{equation*}
    \|m^s(\bm{w})\|_p\le2h\delta+(\tilde{\lambda}_s^{-1}+\tilde{\mu}_s)L\delta\leq L\delta.
\end{equation*}
Then $w\in\Phi_{\mathbb{K}, p}(L\delta)$.
\end{proof}
\end{lemma}
\vspace{5pt}

\textit{Proof of Theorem}~\ref{Lp-finite}.
$\Phi_{\mathbb{K}, p}(L\delta)$ is a closed ball in a finite-dimensional space. From the proofs of Lemma \ref{F_vi property} and Lemma \ref{Q_vi property}, we know that the conclusion still holds for finite sequences. By Lemma \ref{H well defined}, $\mathcal{H}$ is continuous on the set $\Phi_{\mathbb{K}, p}(L\delta)$. By Lemma \ref{H-self}, $\mathcal{H}(\Phi_{\mathbb{K}, p}(L\delta))\subset\Phi_{\mathbb{K}, p}(L\delta)$. Therefore, by the Brouwer Fixed Point Theorem, there exists $\bm{z}=\{z_i\}_{i\in\mathbb{K}}\in\Phi_{\mathbb{K}, p}(L\delta)$ such that $\mathcal{H}(\bm{z})=\bm{z}$.

Now we show that this fixed point $\bm{z}$ is an orbit of $\bm{G}$, i.e., $z_{i+1}=G_i(z_i)$ for $i=-k, \cdots, k$ holds.
Since $\mathcal{H}(\bm{z})=\bm{z}$, equation\eqref{stable} can be rewritten as
\begin{equation}\label{sta}
   \Pi_{x_{i+1}}^sz_{i+1}=\Pi_{x_{i+1}}^s(G_i(z_i)).
\end{equation}
Similarly, equation\eqref{unstable} can be written as:
\begin{equation*}
       \Pi_{x_i}^uz_i=  Q_{z_i}(\Pi_{x_{i+1}}^u(-G_i(z_i)+F_i(z_i)-F_i(\Pi_{x_i}^sz_i)+z_{i+1})).
\end{equation*}
Applying $F_{z_i}=Q_{z_i}^{-1}$ to both sides of the last equation, we obtain
\begin{equation*}
    F_{z_i}(\Pi_{x_i}^uz_i)=\Pi_{x_{i+1}}^u(-G_i(z_i)+F_i(z_i)-F_i(\Pi_{x_i}^sz_i)+z_{i+1}).
\end{equation*}
Recall the definition of $F_{z_i}$:
\begin{equation*}
        F_{z_i}(\Pi_{x_i}^uz_i)=\Pi_{x_{i+1}}^u(F_i(\Pi_{x_i}^sz_i+\Pi_{x_i}^uz_i)-F_i(\Pi_{x_i}^sz_i)).
\end{equation*}
Comparing the last two equations, we have $\Pi_{x_{i+1}}^u(-G_i(z_i)+z_{i+1})=0$, then
\begin{equation}\label{unsta}
    \Pi_{x_{i+1}}^uz_{i+1}=\Pi_{x_{i+1}}^u(G_i(z_i)).
\end{equation}
In summary, from \eqref{sta} and \eqref{unsta}, we obtain
\begin{equation*}
    z_{i+1}=G_i(z_i), i=-k, \cdots, k-1.
\end{equation*}
Let $y_i=\exp_{x_i}(z_i)$, then 
\begin{equation*}
    y_{i+1}=\exp_{x_{i+1}}(z_{i+1})=\exp_{x_{i+1}}(G_i(z_i))=g_i(\exp_{x_i}(z_i))=g_i(y_i), 
\end{equation*}
and 
\begin{equation*}
    d(y_i, x_i)=d(\exp_{x_i}(z_i), x_i)=|z_i|\le|\Pi_{x_i}^sz_i|+|\Pi_{x_i}^uz_i|.
\end{equation*}
Then we have
\begin{equation*}
    \|\{d(x_i, y_i)\}_{i\in\mathbb{K}}\|_p\le\|\{\Pi_{x_i}^sz_i\}\|_p+\|\{\Pi_{x_i}^uz_i\}\|_p\le2L\delta.
\end{equation*}
Take
\begin{equation*}
    \tilde{L}=\max\left\{\dfrac{2h}{\tilde{\lambda}_u-1-\tilde{\mu}_u}, \dfrac{2h}{1-\tilde{\lambda}_s^{-1}-\tilde{\mu}_s}\right\}, 
\end{equation*}
then
\begin{equation*}
    \|\{d(y_i, x_i)\}_{i\in\mathbb{K}}\|_p\le \tilde{L}\|\Delta\|_p.
\end{equation*}
Therefore, we have proved that there exist constants $\tilde{L}, \delta>0$ such that for any finite $\Delta$-pseudo-orbit $\{x_i\}_{i=-k}^k$  with $\|\Delta\|_p\le \delta$, and any $\bm{g}$ with $\|\bm{f}-\bm{g}\|_p\le\delta$, there exists an orbit $\{y_i\}_{i=-k}^k$ of $\bm{g}$ such that
\begin{equation*} 
    \|\{d(x_i, y_i)\}_{i\in\mathbb{K}}\|_p\le L\|\Delta\|_p. 
\end{equation*}
This completes the proof. \hfill{$\square$}

\subsection{Proof of the \texorpdfstring{$L^p$}--bi-shadowing property}

\textit{Proof of Theorem \ref{Lp-bi}}. Given a pseudo-orbit $\bm{x}=\{x_i\}_{i\in\mathbb{Z}}$, for each $k\in\mathbb{N}$, define the finite sequence
    \begin{equation*}
        \bm{x}^{(k)}=\left\{ x_{-k}^{(k)}, \cdots, x_0^{(k)}, \cdots, x_k^{(k)} \right\}, \, \, \, k=1, 2, \cdots
    \end{equation*}
    where $x_i^{(k)}=x_i$ for $i=-k, \cdots, k$. It is obvious that  $\bm{x}^{(k)}$ is a finite $\Delta^{(k)}$-pseudo-orbit of $\bm{f}$, with $\Delta^{(k)}=\{\delta_i\}_{i=-k}^k$, and $\|\Delta^{(k)}\|_p\le\|\Delta\|_p\le\delta.$
    
    For the continuous map $\bm{g}:\mathbf{M}\to\mathbf{M}$ with $\|\bm{f}-\bm{g}\|_p\le\delta$, by Theorem \ref{Lp-finite}, there exist constants $L, \delta>0$ and a finite orbit $\bm{y}^{(k)}=\left\{ y_{-k}^{(k)}, \cdots, y_0^{(k)}, \cdots, y_k^{(k)} \right\}$ of $\bm{g}$ satisfying
    
    (1) $y_{i+1}^{(k)}=g_i(y_i^{(k)})$ for $i=-k, \cdots, k-1$;
    
    (2) Denote $z_i^{(k)}=\exp_{x_i}^{-1}(y_i^{(k)})$, then
\begin{align}\label{序列}
&|z_i^{(k)}|=d(x_i, y_i^{(k)})=d(x_i^{(k)}, y_i^{(k)});\\
 &\|\{|z_i^{(k)}|\}_{i=-k}^k\|_p\le L\|\Delta\|_p
\end{align}
for $i=-k, \cdots, k-1, k$.

In particular, for each $i$ we have the pointwise estimate, 
\begin{equation}\label{d(x_i, y_i)-value}
d(x_i, y_i^{(k)})=|z_i^{(k)}|\le L\|\Delta\|_p, \, \, \, \, \, (|i|\le k).
\end{equation}

Fix $i$. Consider the sequence $\{y_i^{(k)}\}_{|i|\le k}$. By \eqref{d(x_i, y_i)-value}, this sequence lies in the closed ball in the compact manifold $M_i$ centered at $x_i$ with radius $L\|\Delta\|_p$. Thus, the sequence $\{y_i^{(k)}\}_{|i|\le k}$ has a convergent subsequence.

Let $I_0=\mathbb{N}=\{1, 2, 3, \cdots\}$. For $i=0$, there exists $I_1\subset I_0$ an increasing sequence such that $\{y_0^{(k)}\}_{k\in I_1}$ converges; denote its limit by $y_0$.

For $i=1$, for the sequence $\{y_1^{(k)}\}_{k\in I_1, k\ge 1}$, there exists $I_2\subset I_1$ such that $\{y_1^{(k)}\}_{k\in I_2}$ converges; denote its limit by $y_1$.

For $i=-1$, for the sequence $\{y_{-1}^{(k)}\}_{k\in I_2, k\ge 1}$, there exists $I_3\subset I_2$ such that $\{y_{-1}^{(k)}\}_{k\in I_3}$ converges; denote its limit by $y_{-1}$.

Continue this process for $i=2, i=-2, \cdots$.

Generally, assume after the $m$-th step, we have nested index sets
\begin{equation*}
    I_0\supset I_1\supset I_2\supset\cdots\supset I_m
\end{equation*}
and limit points $y_0, y_1, y_{-1}, \cdots, y_{i_m}$ (where $i_m$ is the $m$-th integer in the order described). For the sequence $\{y_{i_{m+1}}^{(k)}\}_{k\in I_{m+1}, k\ge |i_{m+1}|}$, this sequence is still in the compact manifold $M_{i_{m+1}}$, so there exists $I_{m+1}\subset I_m$ such that $\{y_{i_{m+1}}^{(k)}\}_{k\in I_{m+1}}$ converges; denote its limit by $y_{i_{m+1}}$.
Thus, we obtain an infinite sequence of nested index sets:
\begin{equation*}
    I_0\supset I_1\supset I_2\supset\cdots\supset I_m\supset\cdots
\end{equation*}
Define the diagonal indices:
\begin{equation*}
    l_m:= \text{the $m$-th smallest element in } I_m, \, \, \, m=1, 2, 3, \cdots
\end{equation*}

By construction: $l_1<l_2<l_3<\cdots$ and $l_m\in I_m$.
For any integer $i$, suppose $i$ is the $m_i$-th integer processed in the order. Then for $m\ge m_i$, we have $l_m\in I_{m_i}$, and $\{y_i^{(l_m)}\}_{m\ge m_i}$ is a subsequence of the convergent sequence $\{y_i^{(k)}\}_{k\in I_{m_i}}$. Hence, the limit
\begin{equation*}
    y_i:=\lim_{m\to\infty}y_i^{(l_m)}
\end{equation*}
exists. Let $\bm{y}=\{y_i\}_{i\in\mathbb{Z}}$.
For any integer $i$, take sufficiently large $m$ such that $l_m\ge|i|+1$. By the finite orbit property, 
\begin{equation*}
    y_{i+1}^{(l_m)}=g_i(y_i^{(l_m)})
\end{equation*}
Letting $m\to \infty$, by the continuity of $g_i$, we obtain
\begin{equation*}
    y_{i+1}=g_i(y_i).\, \, \, \, \forall i\in\mathbb{Z}.
\end{equation*}
Therefore, $\bm{y}$ is an orbit of $\bm{g}$.

Let $z_i=\exp_{x_i}^{-1}(y_i)$. By the continuity of the exponential map, 
\begin{equation*}
    z_i=\lim_{m\to\infty} z_i^{(l_m)}
\end{equation*}
Extend each finite sequence $z^{(l_m)}$ by zero to an infinite sequence:
\begin{equation*}
    \tilde{z}_i^{(l_m)}=
    \begin{cases}
        z_i^{(l_m)}, &|i|\le l_m;\\
        0, & |i|>l_m.
    \end{cases}
\end{equation*}
From \eqref{序列}, we obtain
\begin{equation*}\label{zi估计}
    \|\tilde{z}^{(l_m)}\|_p^p=\sum_{|i|\le l_m}|z_i^{(l_m)}|^p\le(L\|\Delta\|_p)^p.
\end{equation*}
For each fixed $i$, as $m\to \infty$, we have pointwise convergence $\tilde{z}_i^{(l_m)}\to z_i$, and since $|\tilde{z}_i^{(l_m)}|^p\ge 0$, applying Fatou's Lemma:
\begin{equation*}
    \sum_{i\in \mathbb{Z}}|z_i|^p=\sum_{i\in\mathbb{Z}}\liminf_{m\to\infty}|\tilde{z}_i^{(l_m)}|^p\le\liminf_{m\to\infty}\sum_{i\in\mathbb{Z}}|\tilde{z}_i^{(l_m)}|^p.
\end{equation*}
Then by \eqref{zi估计}, 

\begin{equation*}
\sum_{i\in\mathbb{Z}}|z_i|^p\le(L\|\Delta\|_p)^p
\end{equation*}
Since the limit exists, $\liminf\limits_{k\to\infty}|z_n^{(k)}|^p=\lim\limits_{k\to\infty}|z_n^{(k)}|^p$, therefore
\begin{equation*}
    \|\{d(x_i, y_i)\}\|_p=\|\{|z_i|\}\|_p\le L\|\Delta\|_p.  
\end{equation*}
This completes the proof. \hfill{$\square$} 

\section{Limit bi-shadowing property}

In this section, we study the limit bi-shadowing property of semi-hyperbolic systems. The limit bi-shadowing property requires that the system not only has the usual shadowing property but also that the shadowing error tends to zero when the pseudo-orbit error tends to zero. This property is important in stability analysis and perturbation theory.

\begin{definition}\label{limit-bi-def}
    Let $\bm{f}:\mathbf{M}\to \mathbf{M}$, where $\mathbf{M}=\bigsqcup\limits_{i\in\mathbb{Z}}M_i$. For any sequence $\Delta=\{\delta_i\}_{i\in\mathbb{Z}}$, let $\bm{x}=\{x_i\}_{i\in\mathbb{Z}}$ be a $\Delta$-pseudo-orbit of $\bm{f}$ (i.e., $d(f_i(x_i), x_{i+1})\le\delta_i$ for all $i\in\mathbb{Z}$). Let $\bm{g}:\mathbf{M}\to \mathbf{M}$ be a family of continuous maps. Define
    \begin{equation*}
        \|\bm{f}-\bm{g}\|_{\infty}:=\sup_{i\in\mathbb{Z}}\sup_{x_i\in M_i}d(f_i(x_i), g_i(x_i)).
    \end{equation*}
    If $\bm{f}$ satisfies the following conditions:
    \begin{enumerate}[label=\upshape(\arabic*)]
      \item $\bm{f}$ has the $L^\infty$-bi-shadowing property, i.e., there exist constants $ L, \delta>0, $ such that for any sequence $\Delta$ with $\|\Delta\|_\infty\le \delta$, and any family of continuous maps $\bm{g}$ with $\|\bm{f}-\bm{g}\|_\infty\le\delta, $ if $\bm{x}$ is a $\Delta$-pseudo-orbit of $\bm{f}$, then there exists an orbit $\bm{y}=\{y_i\}_{i\in
      \mathbb{Z}}$ of $\bm{g}$, satisfying $y_{i+1}=g_i(y_i)$ for all $i\in\mathbb{Z}$, such that
      \begin{equation*}
          \|\{d(x_i, y_i)\}_{i\in\mathbb{Z}}\|_\infty\le L\|\Delta\|_\infty;
      \end{equation*}

      \item When $\lim\limits_{|i|\to\infty} \delta_i = 0$ and $\lim\limits_{|i|\to \infty} d(f_i(x_i), g_i(x_i))=0$, the corresponding orbit $\bm{y}$ satisfies
\begin{equation*}
    \lim\limits_{|i|\to\infty}d(x_i, y_i)=0.
\end{equation*}
  \end{enumerate}
   Then $\bm{f}$ is said to have the limit bi-shadowing property.
\end{definition}

\textit{Proof of Theorem \ref{limit-bi}}.
Take  $L>\max\left\{\dfrac{3h}{\tilde{\lambda}_u-2-\tilde{\mu}_u}, \dfrac{3h}{1-2(\tilde{\lambda}_s^{-1}-\tilde{\mu}_s)}\right\}$. Now assume $\eta_i=d(f_i(x_i), g_i(x_i))\to0$ and $\lim\limits_{i\to\infty}\delta_i=0$. Let $\varepsilon_0= L\delta$, define $\varepsilon_n=\varepsilon_{n-1}/2$. Since $\delta_i\to 0$, for each $n\in\mathbb{N}, \, \varepsilon_n>0$, there exists $ k_n\in\mathbb{N}$ such that for $|i|\ge k_n$, we have $\delta_i<\varepsilon_n, \eta_i<\varepsilon_n$.

  Given $l\in\mathbb{Z}^+$, define the sets
    \begin{align*}
        &\tilde{\Phi}=\{\bm{z}=\{z_i\}_{i=-k_l}^{k_l}:z_i\in  T_{x_i}M_i(L\delta) \}\\
        &\tilde{\Phi}_n = \{\bm{z}\in \tilde{\Phi}:|z_i|\le L\varepsilon_n, |i|\in[k_n, k_{n+1}), n=0, 1, \cdots, l\}.
    \end{align*}
Define the operator: $\mathcal{H}_n:\tilde{\Phi}_n\to \Phi$, mapping $\bm{z}\in \tilde{\Phi}_n$ to $\bm{w}=\mathcal{H}_n(\bm{z})$:\begin{align}
   & \Pi_{x_{-k_l}}^sw_{-k_l}=0, \notag\\
    &\Pi_{x_{i+1}}^sw_{i+1}=\Pi_{x_{i+1}}^sG_i(z_i), &i=-k_l, \cdots, k_l-1;\\
    &\Pi_{x_{k_l}}^uw_{k_l}=0, \notag\\
    &\Pi_{x_i}^uw_i=Q_{z_i}(\Pi_{x_{i+1}}^u(-G_i(z_i)+F_i(z_i)-F_i(\Pi_{x_i}^sz_i)+z_{i+1}), &i=-k_l, \cdots, k_l-1.
\end{align}
  
Given $n=0, 1, \cdots, l-1$.  Now we prove $\mathcal{H}_n(\tilde\Phi_n)\subset\tilde\Phi_n.$

(1) Unstable direction, :
To estimate $J_{1i}, J_{2i}, J_{3i}:$

If $i\in[k_n, k_{n+1})$, by equation \eqref{J1}, \eqref{J2}, and \eqref{J3}, 
\begin{align*}
    &|J_{1i}|\le |G_i(z_i)-F_i(z_i)|+|0_{i+1}-F_i(0)|\le h \eta_i+h\delta_{i+1}\le 2h\varepsilon_n;\\
    &|J_{2i}|\le\tilde{\mu}_u|\Pi_{x_i}^sz_i|\le\tilde{\mu}_uL\varepsilon_n;\\
    &|J_{3i}|=|\Pi_{x_{i+1}}^uz_{i+1}|\le L\varepsilon_{n}.
\end{align*}
we obtain, 
\begin{equation*}
    |J_i|\le2h \varepsilon_n+(\tilde{\mu}_u+1)L\varepsilon_n, 
\end{equation*}
\begin{equation*}
    |\Pi_{x_i}^sw_i|\le\tilde{\lambda}_u^{-1}(2h\varepsilon_n+(\tilde{\mu}_u+1)L\varepsilon_n)
\end{equation*}

If$i\in(-k_{n+1}, -k_n], $有$\delta_i<\varepsilon_n, \delta_{i+1}\le \varepsilon_{n-1}$, By the definition of $\varepsilon_n$, then
\begin{align*}
    |\Pi_{x_i}^uw_i|&\le\tilde{\lambda}_u^{-1}(h\eta_i+h\delta_{i+1}+\tilde{\mu}_u|\Pi_{x_i}^sz_i|+|\Pi_{x_{i+1}}^uz_{i+1}|)\\
    &\le\tilde{\lambda}_u^{-1}(h\varepsilon_n+h\varepsilon_{n-1}+\tilde{\mu}_uL\varepsilon_n+L\varepsilon_{n-1})\\
    & = \tilde{\lambda}_u^{-1}(h\varepsilon_n+2h\varepsilon_n+\tilde{\mu}_uL\varepsilon_n+2L\varepsilon_n)\\
    &\le\tilde{\lambda}_u^{-1}[3h+(\tilde{\mu}_u+2)L\varepsilon_n]
\end{align*}
Since $L>\dfrac{3h}{\tilde{\lambda}_u-2-\tilde{\mu}_u}$, for $|i|\in[k_n, k_{n+1})$, we have
\begin{equation*}
    |\Pi_{x_i}^uw_i|\le L\varepsilon_n
\end{equation*} holds.

(2) Stable direction:

If $i\in[k_n, k_{n+1})$, by equation \eqref{I1} and \eqref{I2}, 
\begin{align*}
     &|I_{1i}|\le h\delta_i+h\delta_{i+1};\\
     &|I_{2i}|\le\tilde{\lambda }_s|\Pi_{x_i}^sz_i|+\tilde{\mu}_s|\Pi_{x_i}^uz_i|.
 \end{align*}
 Since $\Pi_{x_i}^sw_i=I_{1(i-1)}+I_{2(i-1)}$, we get
 \begin{align*}
     |\Pi_{x_{i}}^sw_i|&\le h\varepsilon_{n-1}+h\varepsilon_n+\tilde{\lambda}_sL\varepsilon_{n-1}+\tilde{\mu}_sL\varepsilon_{n-1}\\
     &\le[2h+(\tilde{\lambda }_s+\tilde{\mu}_s)L]\varepsilon_n\le L\varepsilon_n.
 \end{align*}
 
 If  $i\in(-k_n, -k_{n+1}]$, similarly, $\delta_i<\varepsilon_n, \delta_{i+1}\le \varepsilon_{n-1}$, 
\begin{align*}
     |\Pi_{x_i}^sw_i|&\le h\cdot2\varepsilon_n+h\varepsilon_n+\tilde{\lambda}_s^{-1}L\cdot2\varepsilon_n+\tilde{\mu}_sL\cdot2\varepsilon_n\\
     &\le[3h+(\tilde{\lambda }_s^{-1}+\tilde{\mu}_s)2L]\varepsilon_n\\
     &\le L\varepsilon_n.
 \end{align*} 

 Therefore, for each $n$, $\tilde{\Phi}_n$ is a closed convex set, and $\mathcal{H}_n$ maps $\tilde{\Phi}_n$ into itself. By the Brouwer Fixed Point Theorem, $\mathcal{H}_n$ has a fixed point $\bm{z}^{(n)}\in\tilde{\Phi}_n$ satisfying $z_{i+1}^{(n)}=G_i(z_i^{(n)})$ for $i\in\mathbb{Z}$. Let $y_i^{(n)}=\exp_{x_i}(z_i^{(n)})$, then $y_{i+1}^{(n)}=g_i(z_i^{(n)})$, and clearly
    \begin{equation*}
        d(x_i, y_i^{(n)})= d(x_i, \exp_{x_i}(z_i^{(n)}))=|z_i^{(n)}|.
    \end{equation*}
    And $|z_i^{(n)}|\le L\varepsilon_n$ holds for $|i|\ge k_n$.

   For each fixed $i$, as $n\to\infty$, $k_n\to\infty$, so for any, since $\{y_i^{(n)}\}$ is uniformly bounded on the compact manifold, there exists a convergent subsequence. By a method similar to Theorem \ref{Lp-bi}, we can obtain an index sequence $\{n_m\}$ such that for each $i$, $\{y_i^{n_m}\}$ converges. Denote
    \begin{equation*}
        y_i:=\lim_{m\to\infty}y_i^{(n_m)}.
    \end{equation*}
By the continuity of $\bm{g}$, $\{y_i\}$ is an orbit of $\bm{g}$.

Since $\varepsilon_n\to0$, for any $\eta>0$, there exists $N$ such that for $m\ge N$, $L\varepsilon_{n_m}<\frac{\eta}{2}$. Fix $m_0$, then for any $|i|\ge k_{n_{m_0}}$, we have
    \begin{equation*}
        d(x_i, y_i^{(n_{m_0})})\le L\varepsilon_{n_{m_0}}<\frac{\eta}{2}.
    \end{equation*}
    By pointwise convergence, there exists $m_1\ge m_0$ such that
   \begin{equation*}
       d(y_i^{(n_{m_1})}, y_i)<\frac{\eta}{2}. \end{equation*}
        Since $n_{m_1}\ge n_{m_0}$, we have $k_{(n_{m_1})}\ge k_{n_{m_0}}=N'$. Therefore, for $|i|\ge N'$, also satisfying $|i|\ge k_{n_{m_1}}$, by the decreasing property of $\varepsilon_n$, we have $d(x_i, y_i^{(n_{m_1})})\le L\varepsilon_{n_{m_1}}\le L\varepsilon_{n_{m_0}}<\frac{\eta}{2}$.
By the triangle inequality, 
\begin{equation*}
    d(x_i, y_i)\le d(x_i, y_i^{(n_{m_1})})+d(y_i(n_{m_1}), y_i)<\frac{\eta}{2}+\frac{\eta}{2}=\eta.
\end{equation*}
Thus, for any $\eta>0$, there exists $N'=k_{n_{m_0}}$ such that for $|i|\ge N'$, $d(x_i, y_i)<\eta$, i.e., 
\begin{equation*}
    \lim\limits_{|i|\to\infty} d(x_i, y_i)=0.\tag*{$\square$} 
\end{equation*}

\section{Asymptotic bi-shadowing property}

This section further studies the asymptotic bi-shadowing property, which examines whether, when the pseudo-orbit error and the map perturbation tend to zero exponentially, the system can still be approximated by the orbit of a nearby system at the corresponding exponential rate. This property plays an important role in studying the stability and orbit approximation behavior of systems in an asymptotic sense.

\textit{Proof of Theorem \ref{Asy-bi}} Let $\{x_i\}_{i\in\mathbb{Z}}$ be a $\Delta$-pseudo-orbit of $\bm{f}$, where $\Delta=\{\delta_i\}_{i\in\mathbb{Z}}$  with
    \begin{equation*}
        \limsup_{|i|\to\infty}\sqrt[|i|]{\delta_i}=v<1, 
    \end{equation*}
then any $\varepsilon\in(0, 1-v)$, there exists $k_0\in \mathbb{N}$ such that for $|i|\ge k_0$, $\sqrt[|i|]{\delta_i}<v+\varepsilon$, i.e., 
    \begin{equation*}
        \delta_i<(v+\varepsilon)^{|i|}, \forall|i|\ge k_0.
    \end{equation*}
   Since $\varepsilon\in(0, 1-v)$, we have $v+\varepsilon<1$. Take $k_1>k_0$. When $k_1-k_0$ is sufficiently large, we can have $(v+\varepsilon)^{k_1-k_0}\le \delta$, where $\delta>0$ is the constant in Theorem \ref{Lp-finite} that guarantees the finite $L^p$-bi-shadowing property.
     Let the family of continuous maps $\bm{g}:\mathbf{M}\to\mathbf{M}$ with $\sup\limits_{x_i\in M_i}d(f_i(x_i), g_i(x_i))\le\delta_i$, and $\limsup\limits_{|i|\to\infty}\sqrt[|i|]{\delta_i}\le v.$

    Take $L>\max\left\{\dfrac{h+h(v+\varepsilon)^{-1}}{\tilde{\lambda}_u-\tilde{\mu}_u-(v+\varepsilon)^{-1}}, \dfrac{h+h(v+\varepsilon)^{-1}}{1-(v+\varepsilon)^{-1}(\tilde{\lambda}_s^{-1}-\tilde{\mu}_s)}\right\}$.Set
    \begin{equation*}
        \hat{\Phi}_k=\{\bm{z}\in \tilde{\Phi} :|z_i|\le L\delta, |i|\le k_1;|z_i|\le L\delta(v+\varepsilon)^{|i|-k_1}, |i|\ge k_1 \}.
    \end{equation*}

    Define the operator $\mathcal{H}:\hat{\Phi}_k\to \tilde{\Phi}$ following the definition in Theorem \ref{Lp-finite} which transforms $\bm{z}=\{z_i\}_{i=-k}^k$ into a sequence $\bm{w}=\{w_i\}_{i=-k}^k$. Next, we prove $\mathcal{H}:\hat{\Phi}_k\subset\hat{\Phi}_k$.
    
    (1) Unstable direction:
    
    If $i=k$, by definition $|\Pi_{x_i}^uw_i|=0;$
    
   If $k_1\le i\le k-1$, by equation \eqref{J1}, \eqref{J2}, and \eqref{J3}, 
    \begin{align*}
       & |J_1|\le h\delta_i+h\delta_{i+1}\le h(v+\varepsilon)^{i}+h(v+\varepsilon)^{i+1}\le2h(v+\varepsilon)^i;\\
        &|J_2|\le \tilde{\mu}_u|\Pi_{x_i}^sz_i|\le \tilde{\mu}_uL\delta(v+\varepsilon)^{i-k_1};\\
        &|J_3|\le |\Pi_{x_{i+1}}^uz_{i+1}|\le L\delta(v+\varepsilon)^{i+1-k_1}\le L\delta(v+\varepsilon)^{i-k_1}.
    \end{align*} 
    Since $(v+\varepsilon)^{k_1-k_0}\le \delta$, then $(v+\varepsilon)^{k_1}\le(v+\varepsilon)^{k_0}\delta\le\delta$. Therefore, 
    \begin{align*}
        |\Pi_{x_i}^uw_i|&\le\tilde{\lambda}^{-1}_u[ 2h(v+\varepsilon)^{i}+\tilde{\mu}_uL\delta(v+\varepsilon)^{i-k_1}+L\delta(v+\varepsilon)^{i-k_1}]\\
        &\le\tilde{\lambda}_u^{-1}(v+\varepsilon)^{i-k_1}[2h(v+\varepsilon)^{k_1}+(\tilde{\mu}_u+1)L\delta]\\
        &\le \tilde{\lambda}_u^{-1}(v+\varepsilon)^{i-k_1}[2h\delta+(\tilde{\mu}_u+1)L\delta]\\
        &\le  L\delta(v+\varepsilon)^{i-k_1};
    \end{align*}
    
    If $-k_1\le i\le k_1-1$, by Lemma \ref{H-self}, for $p=\infty$, 
    \begin{equation*}
        |\Pi_{x_i}^uw_i|\le L\delta, \forall i = -k_1, \cdots, k_1-1.
    \end{equation*}
    
    If $-k\le i\le-k_1$, $|i+1|=-i-1=|i|-1$, we have
    \begin{align*}
        |\Pi_{x_i}^uw_i|&\le\tilde{\lambda}_u^{-1}(h\delta_i+h\delta_{i+1}+\tilde{\mu}_u|\Pi_{x_i}^sz_i|+|\Pi_{x_i}^uz_{i+1}|)\\
        &\le\tilde{\lambda}_u^{-1}[h(v+\varepsilon)^{|i|}+h(v+\varepsilon)^{|i|-1}+\tilde{\mu}_uL\delta(v+\varepsilon)^{|i|-k_1}+L\delta(v+\varepsilon)^{|i|-1-k_1}]\\
        &\le\tilde{\lambda}_u^{-1}(v+\varepsilon)^{|i|-k_1}[h(v+\varepsilon)^{k_1}(1+(v+\varepsilon)^{-1})+(\tilde{\mu}_u+(v+\varepsilon)^{-1})L\delta]\\
        &\le \tilde{\lambda}_u^{-1}(v+\varepsilon)^{|i|-k_1}[h\delta(1+(v+\varepsilon)^{-1})+(\tilde{\mu}_u+(v+\varepsilon)^{-1})L\delta]\\
      &\le  L\delta(v+\varepsilon)^{|i|-k_1}.
    \end{align*}

  (2) Stable direction:
    
    If $k_1\le i\le k$, by equation \eqref{I1} and \eqref{I2}, 
    
    \begin{align*}
       & |I_{1i}|\le h\delta_i+h\delta_{i+1}\le h(v+\varepsilon)^{i}+h(v+\varepsilon)^{i+1};\\
       &|I_{2i}|\le \tilde{\lambda}_s^{-1}|\Pi_{x_i}^sz_i|+ \tilde{\mu}_s|\Pi_{x_i}^uz_i| \le(\tilde{\lambda}_s^{-1}+\tilde{\mu}_s)L\delta(v+\varepsilon)^{i-k_1}.
    \end{align*}
   Since $\Pi_{x_i}^sw_i=I_{1(i-1)}+I_{2(i-1)}$, we have
    \begin{align*}
        |\Pi_{x_i}^sw_i|&\le h(v+\varepsilon)^{i-1}+h(v+\varepsilon)^{i}+(\tilde{\lambda}_s^{-1}+\tilde{\mu}_s)L\delta(v+\varepsilon)^{i-1-k_1}\\
        &\le (v+\varepsilon)^{i-k_1}[h(v+\varepsilon)^{k_1}((v+\varepsilon)^{-1}+1)+(\tilde{\lambda}_s^{-1}+\tilde{\mu}_s)(v+\varepsilon)^{-1}L\delta]\\
        & \le(v+\varepsilon)^{i-k_1}[h\delta((v+\varepsilon)^{-1}+1)+(\tilde{\lambda}_s^{-1}+\tilde{\mu}_s)(v+\varepsilon)^{-1}L\delta]\\
        & \le  L\delta(v+\varepsilon)^{i-k_1}.
    \end{align*}

If $-k_1< i< k_1$by Lemma \ref{H-self}, taking $p=\infty$, we get
\begin{equation*}
    |\Pi_{x_i}^sw_i|\le L\delta, \forall i = -k_1+1, \cdots, k_1-1.
\end{equation*}

If $-k+1\le i \le -k_1$, similarly, $|i-1|=|i|+1$, 
\begin{align*}
    |\Pi_{x_i}^sw_i|&\le h\delta_{i-1}+h\delta_{i}+ \tilde{\lambda}_s^{-1}|\Pi_{x_{i-1}}^sz_{i-1}|+ \tilde{\mu}_s|\Pi_{x_{i-1}}^uz_{i-1}| \\
    &\le h(v+\varepsilon)^{|i-1|}+h(v+\varepsilon)^{|i|}+(\tilde{\lambda}_s^{-1}+\tilde{\mu}_s)L\delta(v+\varepsilon)^{|i-1|-k_1}\\
      &\le h(v+\varepsilon)^{|i|+1}+h(v+\varepsilon)^{|i|}+(\tilde{\lambda}_s^{-1}+\tilde{\mu}_s)L\delta(v+\varepsilon)^{|i|+1-k_1}\\
    &\le (v+\varepsilon)^{|i|-k_1}[h(v+\varepsilon)^{k_1}((v+\varepsilon)+1)+(\tilde{\lambda}_s^{-1}+\tilde{\mu}_s)(v+\varepsilon)L\delta]\\
    &\le (v+\varepsilon)^{|i|-k_1}[2h\delta+(\tilde{\lambda}_s^{-1}+\tilde{\mu}_s)L\delta]\\
    &\le(v+\varepsilon)^{|i|-k_1};
\end{align*}

If $i=-k$, $|\Pi_{x_i}^sw_i|=0.$

So, \[|\Pi_{x_i}^uz_i^{(k)}|, |\Pi_{x_i}^sz_i^{(k)}|\le
\begin{cases}
    L\delta, &|i|\le k_1;\\
    (v+\varepsilon)^{|i|-k_1}, & |i|>k_1.
\end{cases}
\]
Thus, $\mathcal{H}$ maps $\hat{\Phi}_n$ into itself. By the Brouwer Fixed Point Theorem, $\mathcal{H}$ has a fixed point $\bm{z}^{(k)}\in\hat{\Phi}_n$ satisfying $z_{i+1}^{(k)}=G_i(z_i)$. Let $y_{i+1}=\exp_{x_i}(z_i^{(k)})$, then $y_{i+1}^{(k)}=g_i(y_i^{(k)})$. By Theorem \ref{Lp-bi}, for any $i\in\mathbb{Z}$, we can assume $\lim\limits_{k\to\infty}z_{i+1}^{(k)}=z_i$, then
\[|\Pi_{x_i}^uz_i|, |\Pi_{x_i}^sz_i|\le
\begin{cases}
    L\delta, &|i|\le k_1;\\
    (v+\varepsilon)^{|i|-k_1}, & |i|>k_1.
\end{cases}
\]
Thus $\lim\limits_{k\to\infty}y_{i+1}^{(k)}=y_i$ and $\bm{y}=\{y_i\}_{i\in\mathbb{Z}}$ is an orbit of $\bm{g}$, i.e., $y_{i+1}=g_i(y_i).$

For $|i|\ge k_1$, we have
\begin{equation*}
        d(x_i, y_i)=d(\exp_{x_i}(z_i), x_i)=|z_i|\le 2L\delta(v+\varepsilon)^{|i|-k_1}
    \end{equation*}
   Obviously, 
    \begin{align*}
        \sqrt[|i|]{d(x_k, y_k)}&\le [2L\delta(v+\varepsilon)^{|i|-k_1}]^{1/{|i|}}\\
        &\le (2L\delta)^{1/|i|}(v+\varepsilon)^{1-k_1/|i|}
    \end{align*}
   With $|i|\to \infty$, $   (2L\delta)^{1/|i|}\to 1,  (v+\varepsilon)^{1-k_1/|i|}\to v+\varepsilon$.By the arbitrariness of $\varepsilon$, we obtain
    \begin{equation*}
        \limsup_{|i|\to\infty}\sqrt[|i|]{d(x_i, y_i)}\le v.
    \end{equation*}
    This completes the proof. \hfill{$\square$}

\newpage
\addcontentsline{toc}{section}{References}
%\bibliography{ref1}

\end{document}